\documentclass[review, 3p]{elsarticle}
\usepackage{epsfig}
\usepackage{amsbsy,latexsym}
\usepackage{amsmath}
\usepackage{amsthm}
\usepackage{amssymb, mathrsfs}
\usepackage[mathscr]{eucal}
\usepackage{hyperref}
\usepackage{color}

\usepackage[normalem]{ulem}
\usepackage{cancel}

\usepackage{ytableau}

\def\<{\langle}
\def\>{\rangle}




\newcommand{\coq}[3]{q_{#2 \, #3}^{#1} }

\newtheorem{prop}{Proposition}

\begin{document}

\title{On some new hook-content identities}
 \author[sas,muni]{Michal Sedl\'ak}\ead{fyzimsed@savba.sk}
   \address[sas]{Institute of Physics, Slovak Academy of Sciences, D\'ubravsk\'a cesta 9, 845 11 Bratislava, Slovakia}
   \address[muni]{Faculty of Informatics,~Masaryk University,~Botanick\'a 68a,~60200 Brno,~Czech Republic}
   \date{ \today}
\author[QUIT,INFN]{Alessandro Bisio}\ead{alessandro.bisio@unipv.it}
 \address[QUIT]{Dipartimento di Fisica, Universit\`{a} di
   Pavia, via Bassi
   6, 27100 Pavia, Italy}
 \address[INFN]{INFN Sezione di Pavia, via Bassi
   6, 27100 Pavia, Italy}

\begin{abstract}
  Based on the work of A. Vershik\cite{Vershik1992}, we introduce two new combinatorial identities.
  We show how these identities can be used to prove a new hook-content identity.
  The main motivation for deriving this identity was a particular optimization problem in the field of quantum information processing.
\end{abstract}

\maketitle

\section{Introduction}

The representation theory of the symmetric group $S_n$
and of the general linear group $GL(d)$  are related by the so-called
Schur-Weyl duality\cite{schur1901klasse}. This famous theorem proves the decomposition
\begin{align}\label{eq:schurweylduality}
  V^{\otimes n} = \bigoplus_{\lambda \vdash n} V_{\lambda} \otimes
  S_{\lambda} \qquad V= \mathbb{C}^d
\end{align}
for the representation of $GL(d) \times S_n$, where $V_\lambda$ is either
zero or a polynomial irreducible representation of $GL(d)$,
$S_\lambda$ is an irreducible representation of $S_n$ and $\lambda$
runs over the partitions of $n$ and is conveniently represented by
Young diagram.  Both the symmetric group and $GL(d)$ (and especially its compact
subgroups $U(d)$ and $SU(d)$) are of paramount importance in
theoretical physics, especially in quantum mechanics.  For example,
$S_n$ is a fundamental symmetry of systems of identical particles and 
unitary groups represent the set of reversible (finite-dimensional) transformations.
Therefore, it is not surprising
that physics community keeps a steady interest in the
representation theory of these groups and in the Shur-Weyl duality,
from the early work of Weyl \cite{weyl1950theory} up to the most recent
applications in quantum computing and quantum information processing.
For example, Equation~\eqref{eq:schurweylduality} denotes a subsystem
decomposition (induced by the symmetry of the system-environment
interaction) in which one can identify error free subsystems
\cite{PhysRevLett.84.2525,doi:10.1063/1.1824213}, or the relevant
subsystems for quantum estimation \cite{PhysRevLett.93.180503}.  These
are just a couple of examples of a much wider variety of applications
(see e.g. Ref. \cite{RevModPhys.84.711} for a review). In the light of
this discussion it is clear that the dimensions of the irreducible
spaces $V_{\lambda}$ and $S_{\lambda}$, are, more often than not, a
crucial piece of information.  The value of $\dim(S_{\lambda})$ and
$\dim(V_{\lambda})$ are given by the hook length formula
\cite{frame1954hook} and the hook-content formula
\cite{stanley_fomin_1999} respectively.  Those celebrated equations
have a nice combinatorial interpretation and, since their discovery,
they have been generalized (see e.g. \cite{CIOCANFONTANINE20111703}
and references therein) and applied in different fields like algebraic
geometry \cite{floystad2011combinatorial} and probability
\cite{greene1979probabilistic}.  Closely related are also the
Littlewood-Richardson rules\cite{fulton1997young} in the expansion
$S_{\lambda} \otimes S_\mu = \bigoplus_{\nu} S_\nu^{\otimes
  c(\lambda,\mu,\nu)} $ and the branching rules for restricting
$S_{\lambda}$ to $S_{n-1} $ and inducing $S_\lambda$ to $S_{n+1}$
\cite{patera1973tables}.

Our work introduces a new identity,
represented by Eq.~\eqref{eq:SSYTandSYT} in Proposition~\ref{prop:newidentity},
which relates
the dimensions $\dim(S_{\lambda})$,  $\dim(V_{\lambda})$,
$\dim(S_{\lambda^{(j)}})$, and  $\dim(V_{\lambda^{(j)}})$
for any Young diagram $\lambda$ consisting of $n$ boxes and 
diagrams $\lambda^{(j)}$ that can be obtained from $\lambda$ by adding a single box.
The proof of our result relies on a couple of combinatorial
identities, Equations \eqref{eq:hookcont1} and \eqref{eq:hookcont2} in
Proposition~\ref{prop:basic-identities}, which
can be of independent interest. Our approach is modeled after the seminal
work \cite{Vershik1992} of A. Vershik, which provides the essential
tools used in this work.

The main motivation for the presented results originates in the problem we were solving [ref!!!] within the field of quantum information processing.
While trying to derive optimal success probability for a problem with an arbitrary number of uses of a unitary transformation, $n$, and an arbitrary dimension of quantum systems, $d$,
we observed several identities involving $n$ and $d$, which the optimality of the solution required. By reducing the problem even further we arrived at the necessity to prove the hook content identity
~\eqref{eq:hookcontentfinal1}
that forms the core of this paper, and the needed identities for our original problem correspond to our final Proposition \ref{prop:newidentity}.

\section{Basic identities}
\label{sec:combidentities}

Let $\{ a_i \}_{i=1}^{2s} $, $s \geq 1$, be elements of an arbitrary
field and let the following coefficients be well defined:
\begin{align}
  \label{eq:defqjmn}
  \begin{aligned}
  &\coq{j}{m}{n} := \prod_{i = m+1}^{j} \left(1 -
  \frac{a_{2i-1}}{a_{2i-1}+ a_{2i}+
  \dots + a_{2j}} \right)
  \prod_{i = j+1}^{n} \left(1 -
  \frac{a_{2i}}{a_{2j+1}+ a_{2j+2}+
    \dots + a_{2i}} \right)   \\
&\mbox{for } 0 \leq m \leq j \leq n \leq s.
  \end{aligned}
\end{align}
In Ref. \cite{Vershik1992} the following result is proved:
\begin{prop}[Vershik]\label{prop:vershik}
  Let $\coq{j}{m}{n} $ be defined as in Equation \eqref{eq:defqjmn}.
  Then, for any
$ m,n,j$
    , $0 \leq m \leq j \leq n \leq s $ the following identities hold
  \begin{align}
    \label{eq:recursion}
    &     \begin{aligned}
      C_{m, n} \,  \coq{j}{m}{n} &=  \sum_{k = m+1}^j a_{2k} \, \coq{j}{k}{n} +
    \sum_{l = j}^{n-1} a_{2l+1} \, \coq{j}{m}{l}
    \\
C_{m, n} &:= a_{2m+1} + a_{2m+2} + \dots + a_{2n},
    \end{aligned} \\
 &   \sum_{j=m}^{n}  \coq{j}{m}{n}  = 1.
      \label{eq:normalization}
  \end{align}
\end{prop}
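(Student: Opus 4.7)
The plan is to prove the recursion \eqref{eq:recursion} first and then deduce the normalization \eqref{eq:normalization} from it. The key structural observation is that $\coq{j}{m}{n}$ factorizes as $\coq{j}{m}{n} = L_m R_n$, where $L_m$ depends only on $a_{2m+1},\ldots,a_{2j}$ (and $j$), while $R_n$ depends only on $a_{2j+1},\ldots,a_{2n}$ (and $j$). Consequently $\coq{j}{k}{n} = L_k R_n$ for every $k$ with $m\le k\le j$, and $\coq{j}{m}{l} = L_m R_l$ for every $l$ with $j\le l\le n$.

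Dividing \eqref{eq:recursion} by $L_m R_n$ and splitting $C_{m,n} = C_{m,j} + C_{j,n}$, the identity separates into two independent sub-identities,
\[
C_{m,j} \;=\; \sum_{k=m+1}^{j} a_{2k}\, \frac{L_k}{L_m}, \qquad C_{j,n} \;=\; \sum_{l=j}^{n-1} a_{2l+1}\, \frac{R_l}{R_n},
\]
one involving only the ``left half'' of the data and the other only the ``right half''. Each is established by a short induction (on $j-m$ and on $n-j$ respectively). For the left-hand one, I would factor the $i=m+1$ term out of every summand; the bracketed remainder is the same identity at level $m+1$ plus an extra constant $a_{2m+2}$, and using the inductive value $C_{m+1,j} = a_{2m+3}+\cdots+a_{2j}$ one finds that the whole expression collapses to $C_{m,j}$ after multiplying by the pulled-out factor. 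The right-hand identity is symmetric, driven by the telescoping $a_{2n-1} + C_{j,n-1} = a_{2j+1}+\cdots+a_{2n-1}$.

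With \eqref{eq:recursion} in hand, \eqref{eq:normalization} follows by induction on $n-m$. The base $n=m$ is immediate since $\coq{m}{m}{m}=1$. For the inductive step, sum \eqref{eq:recursion} over $j \in \{m,\ldots,n\}$ and interchange the order of summation to obtain
\[
C_{m,n} \sum_{j=m}^{n} \coq{j}{m}{n} \;=\; \sum_{k=m+1}^{n} a_{2k} \sum_{j=k}^{n} \coq{j}{k}{n} \;+\; \sum_{l=m}^{n-1} a_{2l+1} \sum_{j=m}^{l} \coq{j}{m}{l}.
\]
Since each inner range $n-k$ or $l-m$ is strictly smaller than $n-m$, the inductive hypothesis gives that every inner sum equals $1$, and the right-hand side collapses to $\sum_{k=m+1}^{n} a_{2k} + \sum_{l=m}^{n-1} a_{2l+1} = C_{m,n}$, which yields the claim.

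The step I expect to be most delicate is the division by $L_m R_n$, and more generally by the various partial sums of the $a_i$ appearing as denominators; likewise the last step requires $C_{m,n}\neq 0$. Since the statement is over an arbitrary field, I would handle this by treating the identities as formal rational equalities in the $a_i$ viewed as indeterminates, so that the proof is valid on the open set where the denominators do not vanish -- which is precisely the hypothesis under which the $\coq{j}{m}{n}$ are ``well defined'' in the statement.
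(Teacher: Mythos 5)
This is the one statement in the paper that is not proved there at all: the authors simply cite Vershik's 1992 paper for it, so there is no in-paper argument to compare yours against. On its own merits, your proof is correct and self-contained. The key observation --- that $\coq{j}{m}{n}$ factorizes as $L_m R_n$ with the two factors depending on disjoint halves of the data, so that after dividing by $L_m R_n$ and splitting $C_{m,n}=C_{m,j}+C_{j,n}$ the recursion \eqref{eq:recursion} decouples into two one-sided identities --- is sound, and both telescoping inductions check out: on the left, the bracket at level $m+1$ sums to $a_{2m+2}+a_{2m+3}+\cdots+a_{2j}$, which is exactly the denominator of the factor you pulled out, and the right-hand case is symmetric. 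The deduction of \eqref{eq:normalization} by summing \eqref{eq:recursion} over $j$, swapping the order of summation, and applying the inductive hypothesis to the strictly shorter inner ranges is also correct. Two small points of care, both of which you flag appropriately: $L_m$ and $R_n$ can vanish even when the $\coq{j}{m}{n}$ are well defined (a numerator $a_{2i}+\cdots+a_{2j}$ may be zero even though all the denominators in \eqref{eq:defqjmn} are nonzero), so the division genuinely requires the rational-function framing you describe, with specialization at the end; and the final division by $C_{m,n}$ is harmless because $C_{m,n}=a_{2m+1}+\cdots+a_{2n}$ is itself the denominator of the $i=m+1$ factor of $\coq{n}{m}{n}$ and hence nonzero under the well-definedness hypothesis.
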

By rephrasing this result, we could say that $\coq{j}{m}{n}$
are the solution of the recursion  relation \eqref{eq:recursion}.
Proposition~\ref{prop:vershik} is the main tool
for the proof of the following result.
\begin{prop}\label{prop:basic-identities}
  The following identities hold:
  \begin{align}
    \label{eq:hookcont1}
    \sum_{j = m}^{n}
    \left(
    \sum_{i = m+1}^{j}a_{2i-1} - \sum_{i = j+1}^{n}a_{2i}
    \right) \coq{j}{m}{n} &= 0   \\
    \sum_{j = m}^{n}
    \left(
    \sum_{i = m+1}^{j}a_{2i-1} - \sum_{i = j+1}^{n}a_{2i}
    \right)^2 \coq{j}{m}{n} &= \sum_{i=m+1}^{n} a_{2i}
                              \sum_{l=m+1}^{i}a_{2l-1}
                              \label{eq:hookcont2}
  \end{align}
\end{prop}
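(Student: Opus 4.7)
The plan is to prove both identities by induction on $n-m$. The base case $n=m$ is immediate: every sum collapses to a single term with empty inner sums, giving $0=0$. For the inductive step, I will multiply each left-hand side by $C_{m,n}$ and apply Vershik's recursion~\eqref{eq:recursion}, which expresses $C_{m,n}\coq{j}{m}{n}$ as a combination of $\coq{j}{k}{n}$ (with $k>m$) and $\coq{j}{m}{l}$ (with $l<n$) on strictly shorter intervals, where the induction hypothesis applies.

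\textbf{Key decomposition.} Set $A_j:=\sum_{i=m+1}^{j}a_{2i-1}$ and $B_j:=\sum_{i=j+1}^{n}a_{2i}$, and for an interval $[p,q]$ let $\alpha_{pq}:=\sum_{i=p+1}^{q}a_{2i-1}$ and $\beta_{pq}:=\sum_{i=p+1}^{q}a_{2i}$. The key observation is
\begin{equation*}
A_j-B_j \;=\; \alpha_{mk}+\Bigl(\sum_{i=k+1}^{j}a_{2i-1}-\sum_{i=j+1}^{n}a_{2i}\Bigr) \quad (j\ge k),
\end{equation*}
\begin{equation*}
A_j-B_j \;=\; \Bigl(\sum_{i=m+1}^{j}a_{2i-1}-\sum_{i=j+1}^{l}a_{2i}\Bigr)-\beta_{ln} \quad (j\le l),
\end{equation*}
which expresses $A_j-B_j$ as a $j$-independent shift plus the analogous linear form on the shorter interval $[k,n]$ (respectively $[m,l]$). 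After substituting these expressions into the sums produced by the recursion, the induction hypothesis acts directly on the shorter-interval pieces.

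\textbf{Proof of~\eqref{eq:hookcont1}.} The shorter-form contributions vanish by the induction hypothesis, and Vershik's normalization pulls the shift out of the $j$-sum, yielding
\begin{equation*}
C_{m,n}\sum_{j}(A_j-B_j)\coq{j}{m}{n} \;=\; \sum_{k=m+1}^{n}a_{2k}\,\alpha_{mk} \;-\; \sum_{l=m}^{n-1}a_{2l+1}\,\beta_{ln}.
\end{equation*}
Both double sums equal $\sum_{m+1\le p\le q\le n}a_{2p-1}a_{2q}$ (for the second, use the reindexing $p=l+1$, $q=i$), so they cancel and \eqref{eq:hookcont1} follows.

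\textbf{Proof of~\eqref{eq:hookcont2}.} Squaring each decomposition yields a square of the shift, a cross-term, and a square of the shorter form. The cross-terms vanish by the already-established \eqref{eq:hookcont1} on the shorter intervals, while the shorter-form squares evaluate to $\sigma_{kn}$ and $\sigma_{ml}$ by the induction hypothesis, where $\sigma_{pq}:=\sum_{p+1\le l\le i\le q}a_{2l-1}a_{2i}$. The problem reduces to the algebraic identity
\begin{equation*}
C_{m,n}\,\sigma_{mn} \;=\; \sum_{k=m+1}^{n}a_{2k}\bigl[\alpha_{mk}^{2}+\sigma_{kn}\bigr] \;+\; \sum_{l=m}^{n-1}a_{2l+1}\bigl[\sigma_{ml}+\beta_{ln}^{2}\bigr].
\end{equation*}
This polynomial identity is the main obstacle; I plan to verify it by expanding both sides as sums of cubic monomials in the $a_i$, using the splitting $\sigma_{mn}=\sigma_{mk}+\sigma_{kn}+\alpha_{mk}\beta_{kn}$ (from partitioning pairs $(l,i)$ at $k$) and the relation $\beta_{(k-1)n}=a_{2k}+\beta_{kn}$ to match coefficients of $a_{2i_1-1}a_{2i_2-1}a_{2i_3}$ and $a_{2i_1-1}a_{2i_2}a_{2i_3}$ term by term.
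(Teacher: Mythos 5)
Your plan follows the paper's proof essentially verbatim: induction on $n-m$, multiplication by $C_{m,n}$ and insertion of Vershik's recursion, the same shift decompositions of the linear form relative to the shorter intervals $[k,n]$ and $[m,l]$, normalization to pull out the shifts, and the same cancellation of the two double sums for \eqref{eq:hookcont1}. The cubic identity you defer at the end of \eqref{eq:hookcont2} is precisely what the paper verifies by separating both sides into odd-odd-even and odd-even-even monomial types (its $\gamma^{(1)}_{m,n}=\alpha^{(1)}_{m,n}+\beta^{(1)}_{m,n}$ and $\gamma^{(2)}_{m,n}=\alpha^{(2)}_{m,n}+\beta^{(2)}_{m,n}$), and it does hold, so your reduction is sound.
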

\begin{proof}[Proof of Equation~\eqref{eq:hookcont1}]
  The proof is by induction on  $n-m$.
  Let us first consider the case $n-m =1$.
  If we fix an arbitrary $m$  we have $n = m+1$
  and Equation~\eqref{eq:hookcont1} gives:
  \begin{align*}
    -a_{2m+2} \, \coq{m}{m}{m+1} + a_{2m+1} \, \coq{m+1}{m}{m+1}
    =  -a_{2m+2}\frac{a_{2m+1}}{a_{2m+1}+a_{2m+2}} +
    a_{2m+1}\frac{a_{2m+2}}{a_{2m+1}+a_{2m+2}} =0.
  \end{align*}
  Next, we fix arbitrary $m$ and $n$
  with $n-m > 1$
  and let us suppose that the thesis holds for any
  $m',n'$ such that $m'\leq n'$ and
  $n'-m' < n-m$.
  By multiplying Equation~\eqref{eq:hookcont1}
  by $C_{m,n}$ and by using
 the recursion formula~\eqref{eq:recursion} we obtain:
  \begin{align*}
  &C_{m,n}\sum_{j = m}^{n}
    \left(
    \sum_{i = m+1}^{j}a_{2i-1} - \sum_{i = j+1}^{n}a_{2i}
    \right) \coq{j}{m}{n} =   A_{m,n} + B_{m,n} \\
    &A_{m,n} :=
    \sum_{j = m}^{n}
    \left(
    \sum_{i = m+1}^{j}a_{2i-1} - \sum_{i = j+1}^{n}a_{2i}
    \right)
    \sum_{k = m+1}^j a_{2k} \, \coq{j}{k}{n} \\
    &B_{m,n} :=
    \sum_{j = m}^{n}
    \left(
    \sum_{i = m+1}^{j}a_{2i-1} - \sum_{i = j+1}^{n}a_{2i}
    \right)  \sum_{l = j}^{n-1} a_{2l+1} \,  \coq{j}{m}{l}
  \end{align*}
  We start with the coefficient  $A_{m,n}$.
  Since the term with $j=m$ is zero, we have
  \begin{align*}
    A_{m,n} &=
    \sum_{j = m+1}^{n}\sum_{k = m+1}^j
    \left(
    \sum_{i = m+1}^{j}a_{2i-1} - \sum_{i = j+1}^{n}a_{2i}
    \right)
      a_{2k} \, \coq{j}{k}{n} = \sum_{k = m+1}^{n}\sum_{j = k}^n
              \left(
    \sum_{i = m+1}^{j}a_{2i-1} - \sum_{i = j+1}^{n}a_{2i}
    \right)
              a_{2k} \, \coq{j}{k}{n} = \\
& = \sum_{k = m+1}^{n}\sum_{j = k}^n
  \left(
  \sum_{i = m+1}^{k}a_{2i-1} +
  \left(
  \sum_{i = k+1}^{j}a_{2i-1}-
  \sum_{i = j+1}^{n}a_{2i}  \right)
    \right)
  a_{2k} \, \coq{j}{k}{n} = \\
            & = \sum_{k = m+1}^{n} a_{2k}
              \left \{
              \sum_{i = m+1}^{k}a_{2i-1}
              \sum_{j = k}^n \coq{j}{k}{n}
              +
              \sum_{j = k}^n
              \left(
              \sum_{i = k+1}^{j}a_{2i-1}
              - \sum_{i = j+1}^{n}a_{2i}
              \right) \coq{j}{k}{n}
              \right \}= \sum_{k = m+1}^{n} a_{2k}
              \sum_{i = m+1}^{k}a_{2i-1} .
  \end{align*}
  where we used Equation~\eqref{eq:normalization} and
  the inductive hypothesis in the last equality.
  Now we evaluate the coefficient $B_{m,n}$.
  Since the term with $j = n$ is zero we have
  \begin{align*}
    B_{m,n} &:=
    \sum_{j = m}^{n-1}\sum_{l = j}^{n-1}
    \left(
    \sum_{i = m+1}^{j}a_{2i-1} - \sum_{i = j+1}^{n}a_{2i}
              \right)  a_{2l+1}  \coq{j}{m}{l}
              =\sum_{l = m}^{n-1}\sum_{j = m}^{l}
    \left(
    \sum_{i = m+1}^{j}a_{2i-1} - \sum_{i = j+1}^{n}a_{2i}
              \right)  a_{2l+1}\,  \coq{j}{m}{l} =\\
    &=\sum_{l = m}^{n-1}\sum_{j = m}^{l}
    \left(
    \left(\sum_{i = m+1}^{j}a_{2i-1} - \sum_{i = j+1}^{l}a_{2i}\right)
      -
      \sum_{i = l+1}^{n}a_{2i}
      \right)  a_{2l+1}\,  \coq{j}{m}{l} =\\
    &=\sum_{l = m}^{n-1}  a_{2l+1}
      \left \{
      \sum_{j = m}^{l}
      \left(\sum_{i = m+1}^{j}a_{2i-1} - \sum_{i = j+1}^{l}a_{2i}\right)  \coq{j}{m}{l}
      -
      \sum_{j = m}^{l} \coq{j}{m}{l} \sum_{i = l+1}^{n}a_{2i}
\right \} = - \sum_{l = m}^{n-1}  a_{2l+1}\sum_{i = l+1}^{n}a_{2i}
  \end{align*}
  Finally, we have
  \begin{align*}
    A_{m,n} + B_{m,n} &=
    \sum_{k = m+1}^{n} \sum_{i = m+1}^{k}  a_{2i-1} a_{2k} -
    \sum_{l = m}^{n-1}\sum_{i = l+1}^{n}   a_{2l+1} a_{2i}   = \\
     &= \sum_{k = m+1}^{n}\sum_{i = m+1}^{k} a_{2i-1} a_{2k} -
       \sum_{l' = m+1}^{n} \sum_{i = l'}^{n}   a_{2l'-1} a_{2i}   = \\
     &= \sum_{k = m+1}^{n}\sum_{i = m+1}^{k}  a_{2i-1}  a_{2k}  -
       \sum_{i = m+1}^{n} \sum_{l' = m+1}^{i}  a_{2l'-1}  a_{2i}   = 0
  \end{align*}
  where we defined $l' = l+1$.
\end{proof}
\begin{proof}[Proof of Equation~\eqref{eq:hookcont2}]
  The proof is again by induction and is very similar to the proof of
  Equation~\eqref{eq:hookcont1}.
  Let us first fix an arbitrary $m$
  and 
  $n = m+1$.
  Then, Equation~\eqref{eq:hookcont2} becomes:
  \begin{align*}
      a^2_{2m+2} \, \coq{m}{m}{m+1} + a^2_{2m+1} \, \coq{m+1}{m}{m+1}
    &=  a^2_{2m+2}\frac{a_{2m+1}}{a_{2m+1}+a_{2m+2}} +
    a^2_{2m+1}\frac{a_{2m+2}}{a_{2m+1}+a_{2m+2}} =  a_{2m+2}\, a_{2m+1}.
  \end{align*}
  We now fix arbitrary $m$ and $n$
  with $n-m > 1$
  and let us suppose that the thesis holds for any
  $m',n'$ such that $m'\leq n'$ and
  $n'-m' < n-m$.
  By multiplying the left hand side of
  Equation~\eqref{eq:hookcont2} by $C_{m,n}$ and by inserting   the
  recursion formula~\eqref{eq:recursion}
  we obtain:
  \begin{align*}
& C_{m,n}\sum_{j = m}^{n}
    \left(
    \sum_{i = m+1}^{j}a_{2i-1} - \sum_{i = j+1}^{n}a_{2i}
    \right)^2 \coq{j}{m}{n}   = \alpha_{m,n} + \beta_{m,n} \\
  &  \alpha_{m,n}:=
\sum_{j = m}^{n}
    \left(
    \sum_{i = m+1}^{j}a_{2i-1} - \sum_{i = j+1}^{n}a_{2i}
    \right)^2  \sum_{k = m+1}^j a_{2k}  \, \coq{j}{k}{n}
   \\
    &\beta_{m,n}:=
\sum_{j = m}^{n}
      \left(
    \sum_{i = m+1}^{j}a_{2i-1} - \sum_{i = j+1}^{n}a_{2i}
    \right)^2 \sum_{l = j}^{n-1} a_{2l+1} \, \coq{j}{m}{l}
  \end{align*}
Next we rewrite 
coefficient $\alpha_{m,n}$. Since the term with
$j=m$ is zero we have
\begin{align*}
  \alpha_{m,n}=&\sum_{j = m+1}^{n}  \sum_{k = m+1}^j
                \left(
    \sum_{i = m+1}^{j}a_{2i-1} - \sum_{i = j+1}^{n}a_{2i}
                \right)^2 a_{2k}
                \, \coq{j}{k}{n}
               = \\
              =&\sum_{k = m+1}^{n}\sum_{j = k}^n  a_{2k}
                 \left(
                \sum_{i = m+1}^{k}a_{2i-1} +
                \left(
                \sum_{i = k+1}^{j}a_{2i-1}
                -
                \sum_{i = j+1}^{n}a_{2i}
                \right)
                \right)^2  \coq{j}{k}{n}
                = \\
  =&\sum_{k = m+1}^{n}\sum_{j = k}^n  a_{2k}
                 \left(
                \left(\sum_{i = m+1}^{k}a_{2i-1} \right)^2+
                \left(
                \sum_{i = k+1}^{j}a_{2i-1}
                -
                \sum_{i = j+1}^{n}a_{2i}
    \right)^2+  \right. \\
&+
  \left.
                  2  \left(\sum_{i = m+1}^{k}a_{2i-1} \right)
                \left(
                \sum_{i = k+1}^{j}a_{2i-1}
                -
                \sum_{i = j+1}^{n}a_{2i}
                \right)
                \right)  \coq{j}{k}{n}
  = \\
  =&\sum_{k = m+1}^{n}  a_{2k}
                 \left(
                \left(\sum_{i = m+1}^{k}a_{2i-1} \right)^2+
\sum_{i=k+1}^n \sum_{l=k+1}^i a_{2l-1}\, a_{2i}
                \right)
     \end{align*}
where we used the inductive hypothesis,
Equation~\eqref{eq:normalization}
and Equation~\eqref{eq:hookcont1} that was previously proved.
Similarly, we rewrite the coefficent $\beta_{m,n}$. Since the term with
$j=n$ is zero we have
\begin{align*}
  \beta_{m,n}=&\sum_{j = m}^{n-1}\sum_{l = j}^{n-1}
      \left(
    \sum_{i = m+1}^{j}a_{2i-1} - \sum_{i = j+1}^{n}a_{2i}
                \right)^2 a_{2l+1} \, \coq{j}{m}{l} = \\
  =&\sum_{l = m}^{n-1} \sum_{j = m}^{l}
     \left(\left(
    \sum_{i = m+1}^{j}a_{2i-1} - \sum_{i = j+1}^{l}a_{2i}
    \right) -\sum_{i = l+1}^{n}a_{2i} \right)^2 a_{2l+1} \,
     \coq{j}{m}{l} = \\
  =&\sum_{l = m}^{n-1}  a_{2l+1} \sum_{j = m}^{l}
     \left(\left(
    \sum_{i = m+1}^{j}a_{2i-1} - \sum_{i = j+1}^{l}a_{2i}
    \right)^2 -2 \left(\sum_{i = l+1}^{n}a_{2i} \right) \left(
    \sum_{i = m+1}^{j}a_{2i-1} - \sum_{i = j+1}^{l}a_{2i}
     \right)\right.\\
  &+\left.\left(\sum_{i = l+1}^{n}a_{2i} \right)^2 \right)
    \coq{j}{m}{l} =
    \sum_{l = m}^{n-1} a_{2l+1}
    \left(
    \sum_{i=m+1}^l \sum_{k=m+1}^{i} a_{2k-1} \, a_{2i}+
    \left( \sum_{i=l+1}^n a_{2i} \right)^2
    \right),
\end{align*}
where we have used the inductive hypothesis,
Equation~\eqref{eq:normalization}
and Equation~\eqref{eq:hookcont1}.
Let us inspect the product of the right hand side of
Equation~\eqref{eq:hookcont2}
and $C_{m,n}$. We obtain 
\begin{align}
  C_{m,n}  \left(
  \sum_{i=m+1}^{n} \right.  &a_{2i} \,
   \left. \sum_{l=m+1}^{i}a_{2l-1} \right)  =
  \gamma_{m,n}^{(1)} + \gamma_{m,n}^{(2)}, \\
   \gamma_{m,n}^{(1)}&:=\sum_{i=m+1}^{n} \sum_{l=m+1}^{i}
                       \sum_{k=m+1}^n
                       a_{2i}     a_{2l-1} a_{2k-1},
  &\qquad
  \gamma_{m,n}^{(2)}&:=
                      \sum_{i=m+1}^{n}  \sum_{l=m+1}^{i}
                      \sum_{k=m+1}^n
                      a_{2i}a_{2l-1} a_{2k}.
\end{align}
Let us 
define
  \begin{align*}
   \alpha_{m,n}^{(1)}&:=
  \sum_{k = m+1}^{n}  \sum_{i = m+1}^{k}  \sum_{j = m+1}^{k}
                       a_{2i-1} \, a_{2j-1} \, a_{2k},
    &
      \beta_{m,n}^{(1)}&:=
    \sum_{l=m}^{n-1} \sum_{i=m+1}^l \sum_{k=m+1}^{i}
                       a_{2k-1} a_{2i} a_{2l+1},&
  \\
                                                  \alpha_{m,n}^{(2)}&:=
  \sum_{k = m+1}^{n}
                                                                      \sum_{i=k+1}^n \sum_{l=k+1}^i a_{2l-1}\, a_{2i} \, a_{2k},
                                                                      &
     \beta_{m,n}^{(2)}&:=
    \sum_{l=m}^{n-1} \sum_{i=l+1}^n \sum_{j=l+1}^{n}
                        a_{2i} a_{2j} a_{2l+1} \\
   \alpha_{m,n} &= \alpha_{m,n}^{(1)}+\alpha_{m,n}^{(2)}&\beta_{m,n} &= \beta_{m,n}^{(1)}+\beta_{m,n}^{(2)} .
  \end{align*}
  We then have
  \begin{align*}
    \gamma_{m,n}^{(1)}-\beta_{m,n}^{(1)}=&
    \sum_{l=m}^{n-1}
    \sum_{i=m+1}^{n}
    \sum_{k=m+1}^i
    a_{2l+1} a_{2i} a_{2k-1} -
    \sum_{l=m}^{n-1}
    \sum_{i=m+1}^l
    \sum_{k=m+1}^{i}
    a_{2k-1} a_{2i} a_{2l+1} = \\
    =&
    \sum_{l=m}^{n-1}
    \sum_{i=l+1}^n
    \sum_{k=m+1}^{i}
    a_{2k-1} a_{2i} a_{2l+1} =
  \sum_{l=m+1}^{n}
    \sum_{i=l}^n
    \sum_{k=m+1}^{i}
       a_{2k-1} a_{2i} a_{2l-1} = \\
    =&\sum_{i=m+1}^n
    \sum_{l=m+1}^{i}
    \sum_{k=m+1}^{i}
       a_{2k-1} a_{2i} a_{2l-1} = \alpha_{m,n}^{(1)}
  \end{align*}
  In a similar way we have
  \begin{align*}
    \alpha_{m,n}^{(2)}+\beta_{m,n}^{(2)} &=
    \sum_{j = m+1}^n
    \sum_{i=j+1}^{n}
    \sum_{l=j+1}^i
    a_{2j}
    a_{2i}
    a_{2l-1} +
    \sum_{l=m+1}^{n}
    \sum_{i=l}^{n}
    \sum_{j=l}^{n}
    a_{2l-1}
    a_{2i}
    a_{2j}=\\
    &=\sum_{j=m+1}^{n}
    \sum_{l=j+1}^{n}
    \sum_{i=l}^{n}
    a_{2j}
    a_{2l-1}
    a_{2i}
    +
    \sum_{i=m+1}^{n}
    \sum_{l=m+1}^{i}
    \sum_{j=l}^{n}
    a_{2i}
    a_{2l-1}
    a_{2j}=\\
    &=\sum_{i=m+1}^{n}
    \sum_{l=m+1}^{n}
    \sum_{j=l}^{n}
    a_{2i}
    a_{2l-1}
    a_{2j}=
    \sum_{i=m+1}^{n}
    \sum_{j=m+1}^{n}
    \sum_{l=m+1}^{j}
    a_{2i}
    a_{2l-1}
    a_{2j}=
    \gamma_{m,n}^{(2)} \;.
  \end{align*}
  Therefore
  $\alpha_{m,n}^{(1)}+\alpha_{m,n}^{(2)}+\beta_{m,n}^{(1)}+\beta_{m,n}^{(2)}
  = \gamma_{m,n}^{(1)}+\gamma_{m,n}^{(2)}$ which finally proves the thesis.
\end{proof}

\section{Hook-content identities and $GL(n)$}

For a natural number $n$, we denote with
$\lambda = (\lambda_1, \lambda_2, \dots, \lambda_k)$
$\lambda_i \in \mathbb{Z} $,
$\lambda_1 \geq \lambda_2 \geq \dots \geq \lambda_k >0 $,
$\sum_{i=1}^{k}\lambda_i = n $ a \emph{partition} of $n$ and we write
$\lambda \vdash n$, Any partition corresponds to a \emph{Young
  diagram}, which is an array of boxes, in the plane, left-justified,
with $\lambda_i$ cells in the $i$-th row from the top (English convention).
A greek letter $\lambda$ denotes both the partition and the
corresponding Young diagram.  A box $b \in \lambda$ of a Young diagram
can be labeled by a pair of integer numbers $b=(i,j)$ where $i$
denotes the row and $j$ denotes the column.  We denote with $h(b)$ the
\emph{hook lenght} of the box $b = (i,j)$, i.e. the number of boxes
$b'=(i',j')$ such that $i=i'$ and $j' \geq j $ or $j=j'$ and
$i' \geq i$. For example, if $\lambda = (4,3,1)$
and $b = (1,2)$ we have $h(b) = 4$.
The \emph{content} of the box $b=(i,j)$ is defined as
$c(b) := j - i $.  A \emph{Young tableau} of shape $\lambda$ is a Young
diagram $\lambda$ in which each box is filled with an integer
number. A \emph{semi-standard} Young tableau of parameters
$(d, \lambda)$, is a Young tableau of shape $\lambda$ such that the
entries are positive integers no greater that $d$ and they weakly
increase along rows and strictly increase along columns.
For example the following tableau
\begin{align*}
  \begin{aligned}
  \ytableausetup
  {mathmode, boxsize=1.5em}
\begin{ytableau}
 1 & 2  & 2 & 3 \\
 2 & 3  \\
 4
\end{ytableau}
  \end{aligned}
\;\; ,
\end{align*}
is a semistandard Young tableau
of parameters $(4, (4,2,1) )$.
The Stanley's \emph{hook-content formula}\cite{stanley_fomin_1999}
gives the number of semistandard Young tableaux
of parameters $(\lambda, d)$ (denoted with $ SSYT(\lambda , d)$),  namely
\begin{align}
  \label{eq:hookcontentformula}
  SSYT(\lambda , d) = \prod_{b \in \lambda}\frac{d - c(b) }{h(b)}.
\end{align}
The number $SSYT(\lambda , d)$ is the dimension $\dim (V_\lambda)$ of
the vector space $V_\lambda$ which is
 the irreducible polynomial representation of $GL(d)$  labelled by the partition
$\lambda$.

A \emph{standard} Young  tableau of shape $\lambda \vdash n$
is semistandard Young tableau of parameters $(n, \lambda)$ such that the filling is
a bijective assignment of $1,2...,n$.
For example the following tableau
\begin{align*}
  \begin{aligned}
  \ytableausetup
  {mathmode, boxsize=1.5em}
\begin{ytableau}
 1 & 2  & 5 & 7 \\
 3 & 4  \\
 6
\end{ytableau}
  \end{aligned}
\;\; ,
\end{align*}
is a standard Young tableau of shape $(4,2,1)$.  The number of
standard Young tableaux of shape $\lambda \vdash n$ (denoted with
$SYT(\lambda)$) is given by the \emph{hook length formula}\cite{frame1954hook}
\begin{align}
  \label{eq:hookformula}
  SYT(\lambda) = \prod_{b \in \lambda}\frac{n!}{h(b)}.
\end{align}
The number $SYT(\lambda)$ is the dimension $\dim(S_{\lambda})$
of the Specht module $S_\lambda$, i.e. the irreducible
representation of the symmetric group $S_n$ labelled by the partition
$\lambda$.

We now introduce the \emph{step coordinates} for a Young diagram
$\lambda$. First, let us define the following notation for a partition
$\lambda = ((\lambda'_1 , k'_1), (\lambda'_2 , k'_2), \dots ,
(\lambda'_s , k'_s ) )$, where $k'_i$ denotes the multiplicity of the
number $\lambda'_i$ and
$\lambda'_1 > \lambda'_2 >\dots > \lambda'_s $.  For example we have
$(4,4,4,3,3,1,1,1,) = ((4,3),(3,2),(1,3))$ Then we define
$p_1 = \lambda'_s$, $p_i := \lambda'_{s-i+1} - \lambda'_{s-i+2}$ for
$i =2,\dots s$ and $k_i := k'_{s-i+1}$ for $i=1, \dots, s$.  The
numbers $(p_1,k_1 ,p_2, k_2  , \dots, p_s , k_s)$ are the step
coordinates of the Young diagram $\lambda$, the reason for this
notation is clear by looking at the example in
Figure~\ref{fig:stepcoordinate}.
\begin{figure}[h!]
  \begin{center}
  \includegraphics[width=7cm]{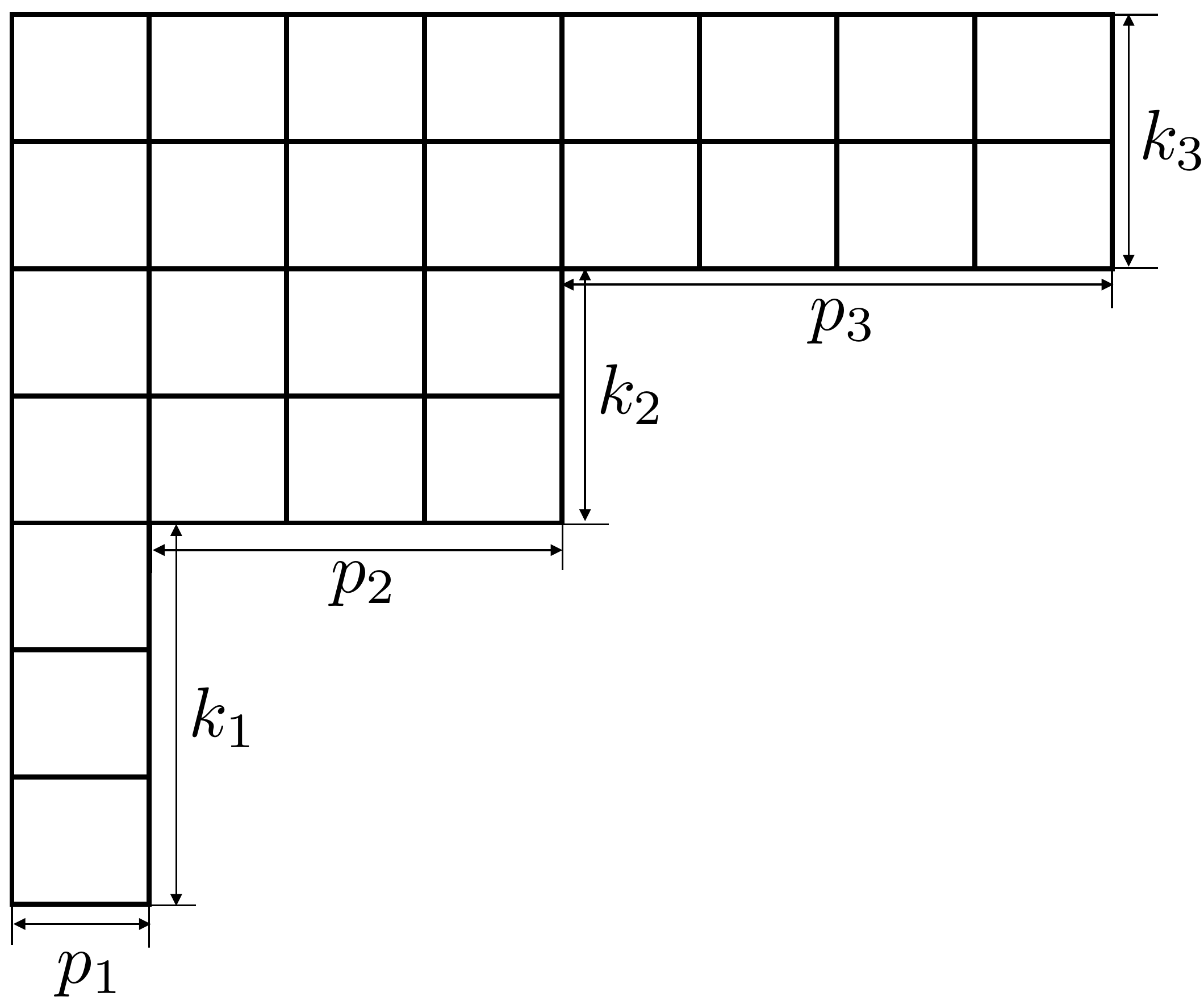}
  \end{center}
  \caption{The Young diagram $\lambda =
    (8,8,4,4,1,1,1)$
  has step coordinates $(1,3, 3,2 ,4,2)$}
  \label{fig:stepcoordinate}
\end{figure}

For a given Young diagram $\lambda$ we denote with $\lambda^{(j)}$
the Young diagram that can be obtained from $\lambda$ by the addition
of the box $b^{(j)}$, for example, if $\lambda = ( 3,3,2,2 )$ we have
\begin{align*}
  \lambda =
 \begin{aligned}
  \ytableausetup
  {mathmode, boxsize=1.8em}
\begin{ytableau}
  \, & \,  & \, \\
  \, & \,  & \, \\
  \, & \, \\
  \, & \,
\end{ytableau}
  \end{aligned}
      \qquad
      \lambda^{(0)} =
 \begin{aligned}
  \ytableausetup
  {mathmode, boxsize=1.8em}
\begin{ytableau}
  \, & \,  & \, \\
  \, & \,  & \, \\
  \, & \, \\
  \, & \, \\
 *(yellow) b^{(0)}
\end{ytableau}
  \end{aligned}
      \qquad
        \lambda^{(1)} =
  \begin{aligned}
  \ytableausetup
  {mathmode, boxsize=1.8em}
\begin{ytableau}
  \, & \,  & \, \\
  \, & \,  & \, \\
  \, & \, & *(yellow) b^{(1)} \\
  \, & \, \\
\end{ytableau}
  \end{aligned}
      \qquad
      \lambda^{(2)} =
    \begin{aligned}
  \ytableausetup
  {mathmode, boxsize=1.8em}
\begin{ytableau}
  \, & \,  & \, & *(yellow) b^{(2)} \\
  \, & \,  & \, \\
  \, & \, \\
  \, & \, \\
\end{ytableau}
  \end{aligned} \quad .
\end{align*}
This is clearly equivalent to say that, for a Young diagram
$\lambda \vdash n$, $\lambda^{(j)}$ is a partition of $n+1$ such that
the Young diagram of $\lambda$ fits inside that of $\lambda^{(j)}$,
and we write $ \lambda^{(j)} \leftarrow \lambda$.

The connection between combinatorial identities studied in Section \ref{sec:combidentities}
and the representation theory is provided by the following observation.
Let us now consider an arbitrary Young diagram $\lambda$
with step coordinates $(p_1,k_1 ,p_2, k_2  , \dots, p_s , k_s)$
and let us apply Equation \eqref{eq:defqjmn}
to the sequence
$a_{2i-1} := p_i$, $a_{2i} := k_i$ $i=1,\dots,  s$.
Then, using the definition of hook-lenght, we have\footnote{This observation appears in the work of Vershik
  \cite{Vershik1992} (see Eq. (17) therein), with
  different notation for the step coordinates.} 
  \begin{align}
    \label{eq:qwithhook}
    \coq{j}{0}{s} =
    \frac
    {\prod_{b\in \lambda} h(b) }
    { \prod_{b\in \lambda^{(j)}} h(b)}  \qquad \mbox{for } j =0, \dots, s,
  \end{align}
  for any Young diagram $\lambda \vdash n $, and $\lambda^{(j)} \vdash
  n+1$,
  $\lambda^{(j)} \leftarrow \lambda $.
Applying this observation to combinatorial identities
~\eqref{eq:hookcont1} and ~\eqref{eq:hookcont2}
and realizing that
  \begin{align}
    \sum_{i = 1}^{j} a_{2i-1} - \sum_{i = j+1}^{s} a_{2i} = \sum_{i = 1}^{j} p_{i} - \sum_{i = j+1}^{s} k_{i} &= c(b^{(j)}) \nonumber \\
     \sum_{i=1}^{s} a_{2i} \sum_{l=1}^{i} a_{2l-1}= \sum_{i=1}^{s} k_i \sum_{l=1}^i p_l &= n \nonumber  \;\; ,
  \end{align}
 we obtain a proof of  the following hook-content identities:
  \begin{align}
    \label{eq:hookcontentfinal1}
    \sum_{\lambda^{(j)} \leftarrow \lambda}
c(b^{(j)})
    \frac
    {\prod_{b\in \lambda} h(b) }
    { \prod_{b\in \lambda^{(j)}} h(b)}  = 0 \\
    \label{eq:hookcontentfinal2}
    \sum_{\lambda^{(j)} \leftarrow \lambda}
(c(b^{(j)}))^2
    \frac
    {\prod_{b\in \lambda} h(b) }
    { \prod_{b\in \lambda^{(j)}} h(b)}  = n
  \end{align}
As consequence of the above Equations \eqref{eq:hookcontentfinal1} and
\eqref{eq:hookcontentfinal2} we also have:
\begin{prop}\label{prop:newidentity}
  For any Young diagram $\lambda \vdash n $,
  and $\lambda^{(j)} \vdash
  n+1$,
   $\lambda^{(j)} \leftarrow \lambda $ we have
   \begin{align}
     \label{eq:SSYTandSYT}
     \sum_{ \lambda^{(j)} \leftarrow \lambda}
     \left ( \frac{SSYT(\lambda^{(j)},d)}{SSYT(\lambda,d)}\right)^2
     \frac{SYT(\lambda)}{SYT(\lambda^{(j)})} = \frac{n+d^2}{n+1}
   \end{align}
\end{prop}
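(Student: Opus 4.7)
The plan is to expand the summand using the hook-content and hook length formulas, so that the sum over $\lambda^{(j)} \leftarrow \lambda$ becomes a linear combination of the three basic identities: the normalization \eqref{eq:normalization} (in its step-coordinate form), and the two hook-content identities \eqref{eq:hookcontentfinal1} and \eqref{eq:hookcontentfinal2}. The key simplification is that, when one adds a single box $b^{(j)}$, the products of $(d-c(b))$ telescope cleanly while the hook-length products become raw quotients.

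First, I would compute the two basic ratios. Using \eqref{eq:hookcontentformula}, the product $\prod_{b\in\lambda^{(j)}}(d-c(b))/\prod_{b\in\lambda}(d-c(b))$ collapses to the single factor $d-c(b^{(j)})$, giving
\begin{align*}
\frac{SSYT(\lambda^{(j)},d)}{SSYT(\lambda,d)} = \bigl(d-c(b^{(j)})\bigr)\,\frac{\prod_{b\in\lambda}h(b)}{\prod_{b\in\lambda^{(j)}}h(b)}.
\end{align*}
Similarly, from \eqref{eq:hookformula}, $\frac{SYT(\lambda)}{SYT(\lambda^{(j)})} = \frac{1}{n+1}\,\frac{\prod_{b\in\lambda^{(j)}}h(b)}{\prod_{b\in\lambda}h(b)}$.

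Next, I would multiply these together to get
\begin{align*}
\left(\frac{SSYT(\lambda^{(j)},d)}{SSYT(\lambda,d)}\right)^{2}\frac{SYT(\lambda)}{SYT(\lambda^{(j)})} = \frac{\bigl(d-c(b^{(j)})\bigr)^{2}}{n+1}\,\frac{\prod_{b\in\lambda}h(b)}{\prod_{b\in\lambda^{(j)}}h(b)}.
\end{align*}
Note how one factor of $\prod h(b)_{\lambda^{(j)}}/\prod h(b)_{\lambda}$ cancels, leaving exactly the weight $\prod h(b)_{\lambda}/\prod h(b)_{\lambda^{(j)}}$ that appears in \eqref{eq:qwithhook} and in both identities \eqref{eq:hookcontentfinal1} and \eqref{eq:hookcontentfinal2}.

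Finally, I would expand $(d-c(b^{(j)}))^{2}=d^{2}-2d\,c(b^{(j)})+(c(b^{(j)}))^{2}$ and split the sum into three pieces. The $d^{2}$ term yields $d^{2}$ after applying $\sum_{\lambda^{(j)}\leftarrow\lambda}\prod h(b)_{\lambda}/\prod h(b)_{\lambda^{(j)}}=1$, which is \eqref{eq:normalization} transported through \eqref{eq:qwithhook}. The linear term vanishes by \eqref{eq:hookcontentfinal1}, and the quadratic term contributes $n$ by \eqref{eq:hookcontentfinal2}. Dividing by $n+1$ gives $(n+d^{2})/(n+1)$, matching the right-hand side. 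There is no substantive obstacle here; everything is bookkeeping once one has noticed the cancellation of hook-length products that reduces the summand to a weighted polynomial in $c(b^{(j)})$.
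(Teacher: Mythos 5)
Your proposal is correct and follows essentially the same route as the paper's own proof: reduce the summand to $\frac{(d-c(b^{(j)}))^2}{n+1}\,\frac{\prod_{b\in\lambda}h(b)}{\prod_{b\in\lambda^{(j)}}h(b)}$ via the hook-content and hook-length formulas, expand the square, and apply the normalization identity together with Equations~\eqref{eq:hookcontentfinal1} and \eqref{eq:hookcontentfinal2}. Your write-up actually makes the intermediate ratio computations slightly more explicit than the paper does, but the argument is identical.
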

\begin{proof}
  By expanding Equation~\eqref{eq:SSYTandSYT} we obtain
  \begin{align*}
    (n+1) \sum_{ \lambda^{(j)} \leftarrow \lambda}
    & \left(
      \frac{SSYT(\lambda^{(j)},d)}{(SSYT(\lambda,d)}
      \right)^2
    \frac{SYT(\lambda)}{SYT(\lambda^{(j)})} =
   \sum_{ \lambda^{(j)} \leftarrow \lambda}
     (d- c(b^{(j)}))^2  \frac
    {\prod_{b\in \lambda} h(b) }
    { \prod_{b\in \lambda^{(j)}} h(b)}  = \\
     &
= \sum_{ \lambda^{(j)} \leftarrow \lambda}
       \left(
       d^2 -2d c(b^{(j)}) +     (c(b^{(j)}))^2
       \right)
       \frac
    {\prod_{b\in \lambda} h(b) }
    { \prod_{b\in \lambda^{(j)}} h(b)}
      = d^2 + n   \; ,
  \end{align*}
  where the last equality follows from
  Equations~\eqref{eq:normalization},~\eqref{eq:qwithhook},~\eqref{eq:hookcontentfinal1}
  and \eqref{eq:hookcontentfinal2}.
\end{proof}
We conclude this section by noticing that
Equation~\eqref{eq:hookcontentfinal1},
can be alternatively proved by combining the hook content
formula~\eqref{eq:hookcontentformula} and easy consideration of
representation theory.
Indeed, let us consider the trivial identity
\begin{align}
  \label{eq:trivialidentity}
d \dim (V_{\lambda} ) = \sum_{ \lambda^{(j)} \leftarrow \lambda}
\dim(V_{\lambda^{(j)}})
\end{align}
where
$V_\lambda$ an irreducible polynomial
representation of $GL(d)$, $V$ is the fundamental (or defining)
representation of $GL(d)$, and $V_{\lambda^{(j)}}$ are the
irreducible inequivalent polynomial
representations in the decomposition
$ V_{\lambda} \otimes V = \sum_{ \lambda^{(j)} \leftarrow \lambda}
V_{\lambda^{(j)}}$.
From the hook content formula Equation~\eqref{eq:trivialidentity}
becomes
\begin{align*}
  \label{eq:trivialidentity2}
d = \sum_{ \lambda^{(j)} \leftarrow \lambda}
  \frac{\dim(V_{\lambda^{(j)}}) }{ \dim (V_{\lambda} ) } &=
  \sum_{ \lambda^{(j)} \leftarrow \lambda}(d - c(b^{(j)})) \frac
    {\prod_{b\in \lambda} h(b) }
                                                           { \prod_{b\in \lambda^{(j)}} h(b)}
                                                           \implies \sum_{ \lambda^{(j)} \leftarrow \lambda}
  c(b^{(j)}) \frac
    {\prod_{b\in \lambda} h(b) }
  { \prod_{b\in \lambda^{(j)}} h(b)} =0
\end{align*}
where we used Equations ~\eqref{eq:qwithhook} and ~\eqref{eq:normalization} in the final step.

\section{Conclusion}

In this paper we proved some new combinatorial identities,
Equations~\eqref{eq:hookcont1} and \eqref{eq:hookcont2}, which can be
proved following the techniques of Ref.\cite{Vershik1992}.
These identities lead to a couple of hook content identities
(Equations~\eqref{eq:hookcontentfinal1} and
\eqref{eq:hookcontentfinal2}). The first identity can be proved with
easy arguments from representation theory, and our approach provides
an alternative proof. On the other hand,
Equation~\eqref{eq:hookcontentfinal2} and Equation~\eqref{eq:SSYTandSYT}
are new.

The representation theory of the symmetric group and of the general
linear group play a significant role in many areas of quantum
information, as discussed e.g. in Ref. \cite{harrowthesis}.
In particular, Equation~\eqref{eq:SSYTandSYT} is directly linked to
the optimal solution of the perfect probabilistic storing and
retrieving of an unknown unitary
transformation \cite{perfectlearningdraft}, which was the problem that led us to prove the presented hook-content identities.

\section*{Acknowledgement}
A. B. is supported by the John Templeton Foundation under the
project ID\# 60609 Quantum Causal Structures. The opinions expressed in
this publication are those of the author and do not necessarily
reflect the views of the John Templeton Foundation.
M.S. acknowledges the support by the QuantERA project HIPHOP (project ID 731473), projects QETWORK (APVV-14-0878), MAXAP (VEGA 2/0173/17), GRUPIK (MUNI/G/1211/2017) and the support of the Czech Grant Agency (GAČR) project no. GA16-22211S. The authors are grateful to M. Ziman for fruitful discussions and collaborative work, which lead to formulation of identity, which is proved in this manuscript.

\section*{References}

\bibliographystyle{elsarticle-num}
\bibliography{bibliography}

\begin{thebibliography}{10}
\expandafter\ifx\csname url\endcsname\relax
  \def\url#1{\texttt{#1}}\fi
\expandafter\ifx\csname urlprefix\endcsname\relax\def\urlprefix{URL }\fi
\expandafter\ifx\csname href\endcsname\relax
  \def\href#1#2{#2} \def\path#1{#1}\fi

\bibitem{Vershik1992}
A.~M. Vershik, \href{https://doi.org/10.1007/BF01480684}{Hook formula and
  related identities}, Journal of Soviet Mathematics 59~(5) (1992) 1029--1040.
\newblock \href {http://dx.doi.org/10.1007/BF01480684}
  {\path{doi:10.1007/BF01480684}}.
\newline\urlprefix\url{https://doi.org/10.1007/BF01480684}

\bibitem{schur1901klasse}
I.~Schur, {\"U}ber eine Klasse von Matrizen, die sich einer gegebenen Matrix
  zuordnen lassen, Dieterich in G{\"o}ttingen, 1901.

\bibitem{weyl1950theory}
H.~Weyl, The theory of groups and quantum mechanics, Courier Corporation, 1950.

\bibitem{PhysRevLett.84.2525}
E.~Knill, R.~Laflamme, L.~Viola,
  \href{https://link.aps.org/doi/10.1103/PhysRevLett.84.2525}{Theory of quantum
  error correction for general noise}, Phys. Rev. Lett. 84 (2000) 2525--2528.
\newblock \href {http://dx.doi.org/10.1103/PhysRevLett.84.2525}
  {\path{doi:10.1103/PhysRevLett.84.2525}}.
\newline\urlprefix\url{https://link.aps.org/doi/10.1103/PhysRevLett.84.2525}

\bibitem{doi:10.1063/1.1824213}
M.~Junge, P.~T. Kim, D.~W. Kribs,
  \href{https://doi.org/10.1063/1.1824213}{Universal collective rotation
  channels and quantum error correction}, Journal of Mathematical Physics
  46~(2) (2005) 022102.
\newblock \href {http://arxiv.org/abs/https://doi.org/10.1063/1.1824213}
  {\path{arXiv:https://doi.org/10.1063/1.1824213}}, \href
  {http://dx.doi.org/10.1063/1.1824213} {\path{doi:10.1063/1.1824213}}.
\newline\urlprefix\url{https://doi.org/10.1063/1.1824213}

\bibitem{PhysRevLett.93.180503}
G.~Chiribella, G.~M. D'Ariano, P.~Perinotti, M.~F. Sacchi,
  \href{https://link.aps.org/doi/10.1103/PhysRevLett.93.180503}{Efficient use
  of quantum resources for the transmission of a reference frame}, Phys. Rev.
  Lett. 93 (2004) 180503.
\newblock \href {http://dx.doi.org/10.1103/PhysRevLett.93.180503}
  {\path{doi:10.1103/PhysRevLett.93.180503}}.
\newline\urlprefix\url{https://link.aps.org/doi/10.1103/PhysRevLett.93.180503}

\bibitem{RevModPhys.84.711}
D.~J. Rowe, M.~J. Carvalho, J.~Repka,
  \href{https://link.aps.org/doi/10.1103/RevModPhys.84.711}{Dual pairing of
  symmetry and dynamical groups in physics}, Rev. Mod. Phys. 84 (2012)
  711--757.
\newblock \href {http://dx.doi.org/10.1103/RevModPhys.84.711}
  {\path{doi:10.1103/RevModPhys.84.711}}.
\newline\urlprefix\url{https://link.aps.org/doi/10.1103/RevModPhys.84.711}

\bibitem{frame1954hook}
J.~S. Frame, G.~d.~B. Robinson, R.~M. Thrall, et~al., The hook graphs of the
  symmetric group, Canad. J. Math 6~(316) (1954) C324.

\bibitem{stanley_fomin_1999}
R.~P. Stanley, S.~Fomin, Enumerative Combinatorics, Vol.~2 of Cambridge Studies
  in Advanced Mathematics, Cambridge University Press, 1999.
\newblock \href {http://dx.doi.org/10.1017/CBO9780511609589}
  {\path{doi:10.1017/CBO9780511609589}}.

\bibitem{CIOCANFONTANINE20111703}
I.~Ciocan-Fontanine, M.~Konvalinka, I.~Pak,
  \href{http://www.sciencedirect.com/science/article/pii/S0097316511000288}{The
  weighted hook length formula}, Journal of Combinatorial Theory, Series A
  118~(6) (2011) 1703 -- 1717.
\newblock \href {http://dx.doi.org/https://doi.org/10.1016/j.jcta.2011.02.004}
  {\path{doi:https://doi.org/10.1016/j.jcta.2011.02.004}}.
\newline\urlprefix\url{http://www.sciencedirect.com/science/article/pii/S0097316511000288}

\bibitem{floystad2011combinatorial}
G.~Fl{\o}ystad, T.~Johnsen, A.~L. Knutsen, Combinatorial Aspects of Commutative
  Algebra and Algebraic Geometry: The Abel Symposium 2009, Vol.~6, Springer
  Science \& Business Media, 2011.

\bibitem{greene1979probabilistic}
C.~Greene, A.~Nijenhuis, H.~S. Wilf, A probabilistic proof of a formula for the
  number of young tableaux of a given shape, Advances in Mathematics 31~(1)
  (1979) 104--109.

\bibitem{fulton1997young}
W.~Fulton, Young tableaux: with applications to representation theory and
  geometry, Vol.~35, Cambridge University Press, 1997.

\bibitem{patera1973tables}
J.~Patera, D.~Sankoff, Tables of branching rules for representations of simple
  lie algebras.

\bibitem{harrowthesis}
A.~W. Harrow, Applications of coherent classical communication and the schur
  transform to quantum information theory, PhD thesis (2005)
  arXiv:quant--ph/051225.

\bibitem{perfectlearningdraft}
M.~Sedlak, A.~Bisio, M.~Ziman, Optimal probabilistic storage and retrieval of
  unitary channels, in preparation.

\end{thebibliography}

\end{document}